\newcommand{\Fc}{\mathcal{F}}
\newcommand{\Nb}{\mathbb{N}}
\newcommand{\Rb}{\mathbb{R}}
\newcommand{\Kb}{\mathbb{K}}
\newtheorem{theorem}{Theorem}[section]
\newtheorem{proposition}[theorem]{Proposition}
\newtheorem{definition}[theorem]{Definition}
\newtheorem{example}[theorem]{Example}
\begin{document}

\title{Frequently hypercyclic $C_{0}$-semigroups indexed with complex sectors}

%\date{\today}
\author[Shengnan He]{Shengnan He}
\address{School of Humanities and Fundamental Sciences, Shenzhen Institute of Information Technology, Shenzhen 518172, China.}
\email{heshnmath@sziit.edu.cn}

\author[Zongbin Yin]{Zongbin Yin}
\address{School of Mathematics and Systems Science, Guangdong Polytechnic Normal University, Guangzhou 510665, China.}
\keywords{frequent hypercyclicity, complex sector,  strongly continuous semigroups, translation semigroup}
\email{yinzb\_math@163.com}
\begin{abstract}
In this paper, we study frequent hypercyclicity for strongly continuous semigroups of operators \(\left\{T_{t}\right\}_{t\in\Delta}\) indexed with complex sectors.  We propose a revised and more natural definition of frequent hypercyclicity  compared to the one in \cite{Chaouchi}.  Additionally, we establish a sufficient condition and a necessary condition for a $C_0$-semigroup \(\{T_{t}\}_{t \in \Delta}\) to be frequently hypercyclic. Moreover, we derive a practical and applicable criterion for translation semigroups \(\{T_{t}\}_{t \in \Delta}\) on $L^p_\rho(\Delta, \mathbb{K})$ spaces, expressed in terms of the integral of the weight function. As a result, we provide explicit examples of frequently hypercyclic translation semigroups on \(L^{p}_{\rho}(\Delta, \mathbb{K})\). Lastly, we present a necessary condition on the weight function for the translation semigroups, under which it is demonstrated that Example I (i) \cite{Chaouchi} is not frequently hypercyclic under the revised definition.

\end{abstract}
\maketitle

\section{Introduction}
During the last three decades, the study of chaotic dynamics generated by linear systems on infinite-dimensional vector spaces has drawn considerable attention. Research on linear chaos centered primarily on individual linear operators on topological vector spaces(see \cite{Bayart,Peris,Yin2018,He2018} and references therein), $C_0$-semigroups $\{T_t\}_{t \geq 0}$ (see~\cite{Kalmes2007,Conejero2017,Lizama,Taqbibt2024}) and $C_0$-semigroups $\{T_t\}_{t \in \Delta}$ indexed by complex sectors (see~\cite{Conejero2007,Conejero2009,Chaouchi,Hesf,He2020,He2021,Lorenzi,Liang,He2024}) on Banach spaces. Recall that a complex sector $\Delta$ is defined by $\Delta := \Delta(\alpha) = \{ re^{i\theta} \, | \, r \geq 0, \, |\theta| \leq \alpha \}$ with $0 < \alpha \leq \pi/2$. For the convenience of the reader, we also recall that a one-parameter family $\{T_{t}\}_{t\in S}$ ($S=\Rb^{+}\text{ or }\Delta$) of operators on a Banach space $X$ is called a \emph{strongly continuous semigroup of operators} or a \emph{$C_{0}$-semigroup}, if it satisfies $T_{0} = I$, $T_{t+s} = T_{t}T_{s}\ $ for all $s,t\in S$ and $\lim_{s\rightarrow t}T_{s}x=T_{t}x\ $ for all $x\in X$, $t\in S$.

For a long time, it was believed that chaos only existed in nonlinear systems. However, it has been shown that nearly all chaotic properties that may arise in nonlinear dynamical systems, such as point transitivity (hypercyclicity in linear case), topological transitivity,  topological weak mixing, Li-Yorke chaos, Devaney chaos, and positive entropy, can also occur in linear systems on infinite-dimensional spaces (see \cite{Bayart,Peris,Yin2018,He2018} and references therein).
Among others, \emph{hypercyclicity} (see \cite{Bayart,Peris,Ansari1995,Bes2016,Bes2019}) is one of the most fundamental and widely studied properties. Recall that a continuous operator $T$ (resp. a $C_0$-semigroup $\{T_t\}_{t \in S}$) on a separable topological vector space $X$ is \emph{hypercyclic} if there exists a vector $x_0 \in X$, called a \emph{hypercyclic vector}, such that the orbit $$\operatorname{Orb}(T, x_0) := \{ T^n x_0 \, | \, n \in \mathbb{Z}^+ \}(\text{resp. }\operatorname{Orb}(\{T_t\}_{t \in S}, x_0) := \{ T_t x_0 \, | \, t \in S \})$$ is dense in $X$, i.e. the meeting time set \[N(x,U):=\left\{t\in \mathbb{Z}^+(\text{resp. } S):T_tx\in U\right\}\] is not empty for every non-empty open subset $U$ of $X$. 

In recent years, frequent hypercyclicity \cite{Bayart2004,Bayart2006,Bonilla2007,GrosseErdmann2005,Mangino2011,Mangino2015,Agneessens2023,Bayart2015,Bonilla2018,Cardeccia2024,Ernst2021,Grivaux2021,Menet2017,Menet2022} has become a new research focus. The intriguing aspect of studying frequent hypercyclicity lies in its ability to quantify the frequency with which hypercyclic vectors enter each non-empty open set, requiring that the meeting time set has positive lower density. Recall that 
\begin{itemize}
\item The \emph{lower density} of a subset $A \subset \mathbb{N}_0$ is defined as
\[
\text{dens}(A) = \liminf_{N \to \infty} \frac{\text{card}\{0 \leq n \leq N \,:\, n \in A\}}{N + 1}.
\]
\item the \emph{lower density} of a measurable set $M \subset \mathbb{R}_+$ is defined by
\[
\text{Dens}(M) := \liminf_{N \to \infty} \frac{\mu(M \cap [0, N])}{N},
\]
where $\mu$ is the Lebesgue measure on $\mathbb{R}_+$. 
\item An operator $T$ (resp. a $C_0$-semigroup $(T_t)_{t \geq 0}$) on a Bananch space \(X\) is said to be \emph{frequently hypercyclic} if there exists $x \in X$ such that 
\[
\text{dens}\{n \in \mathbb{N}_0 \,:\, T^n x \in U\} > 0\ (\text{resp. } \text{Dens}(\{t \in \mathbb{R}_+ : T_t x \in U\}) > 0)
\] for any non-empty open set $U \subset X$.
\end{itemize}

However, when the time indices are extended to a complex sector $\Delta$, defining frequent hypercyclicity and proving the associated criteria become more challenging.
In 2020, Chaouchi et al. \cite{Chaouchi} first introduced and analyzed frequently hypercyclic $C_{0}$-semigroups $\{T_t\}_{t \in \Delta}$ indexed with complex sectors. 
Recall that 
\begin{itemize}
    \item (Definition 6 \cite{Chaouchi} by setting \(q=1\)) The lower density of a measurable set $A\subset \Delta$ is defined as:
    \[
    \underline{d}(A) := \liminf_{t \to \infty} \frac{m(A \cap \Delta_{t})}{t},
    \]
where $m$ is the Lebesgue measure and \(
\Delta_t := \{s \in \Delta : |s| \leq t\}\).
\item A $C_{0}$-semigroup $\{T_t\}_{t \in \Delta}$ on a Bananch space \(X\) is called \emph{frequently hypercyclic}, if there exists $x \in X$ such that 
\[
\underline{d}\{t\in \Delta :\ T_{t} x \in U\} > 0
\] for any non-empty open set $U \subset X$.
\end{itemize}

The above definition of frequent hypercyclicity for a $C_{0}$-semigroup $\{T_t\}_{t \in \Delta}$ is similar to that for a single operator and traditional $C_{0}$-semigroups $\{T_t\}_{t \in \mathbb{R}_+}$. However, the above definition of lower density provided in \cite{Chaouchi} appears to lack naturalness and soundness, according to which, any ray in \(\Delta\), defined as \(R_\theta := \{re^{i\theta} : r \geq 0\}\), has positive lower density, while it is easy to find that any ray in \(\Delta\) is sparse and considering them to have positive lower density is unreasonable.

Based on the above analysis and reasoning, we propose the following revised and more reasonable definition of lower density. In the sequel, we assume that $\Delta := \Delta(\alpha) = \{ re^{i\theta} \, | \, r \geq 0, \, |\theta| \leq \alpha \}$ with $0 < \alpha \leq \pi/2$. We denote that $m$ the Lebesgue measure and \(
\Delta_t := \{s \in \Delta : |s| \leq t\}\).
\begin{definition} 
The lower density of a measurable set $A\subset \Delta$ is defined as:
    \[
\underline{D}(A) := \liminf_{t \to \infty} \frac{m(A \cap \Delta_{t})}{m(\Delta_t)}=\liminf_{t \to \infty} \frac{m(A \cap \Delta_{t})}{\alpha t^{2}}.
    \]
\end{definition}
\begin{definition}\label{newdef}
A $C_0$-semigroup $\{T_t\}_{t \in \Delta}$ on a Banach space \(X\) is said to be frequently hypercyclic if there exists $x \in X$ such that $\underline{D}(\{t \in \Delta : T_t x \in U\}) > 0$ for any non-empty open set $U \subset X$. In this case, $x$ is called a frequently hypercyclic vector for $\{T_t\}_{t \in \Delta}$.
\end{definition}
Some sets that possess positive lower density under the definition in \cite{Chaouchi} no longer exhibit this property according to our revised definition. Consequently, a $C_0$-semigroup $\{T_t\}_{t \in \Delta}$ that is classified as frequently hypercyclic under the definition in \cite{Chaouchi} may no longer be frequently hypercyclic under the new definition. 

As an illustration, we consider an important example in the study of the dynamics of $C_0$-semigroups $\{T_t\}_{t \in \Delta}$, which was previously shown to be frequently hypercyclic under the definition in \cite{Chaouchi}. In Section 3, we will demonstrate that this semigroup is not frequently hypercyclic under our updated definition. 
\begin{example}\label{exam0}
Let $1\leq p<\infty$, $\Delta := \Delta(\pi/4) = \{ re^{i\theta} \, | \, r \geq 0, \, |\theta| \leq \pi/4 \}$ and \[
\rho(x + iy) :=
\begin{cases}
1 & \text{if } x + y \geq \sqrt{x - y}, \\
e^{x+y - \sqrt{x-y}} & \text{if } x + y < \sqrt{x - y}.
\end{cases}
\] Then the translation semigroup \(\{T_t\}_{t \in \Delta(\pi/4)}\) on \( X = L^p_\rho(\Delta(\pi/4),\mathbb{K}) \) is frequently hypercyclic under the definition in \cite{Chaouchi}, but not frequently hypercyclic under Definition \ref{newdef}. 

\end{example}
Now we clarify relevant notations used in the example above. Unless stated otherwise, these notations will be used consistently throughout the paper.

The translation semigroup $\{T_t\}_{t \in \Delta}$ on a $L^p$-space, $L^p_\rho(\Delta, \mathbb{K})$, $1 \leq p < \infty$ is given by:
\[
(T_{t}f)(x) := f(x + t), \quad x \in \Delta, \, t \in \Delta,
\]
and the $L^p$-space, $L^p_\rho(\Delta, \mathbb{K})$, is defined by
\[
L^p_\rho(\Delta, \mathbb{K}) := \left\{ u : \Delta \to \mathbb{K} : u \text{ measurable and } \|u\|_p = \left( \int_\Delta |u(\tau)|^p \rho(\tau) \, d\tau \right)^{1/p} < \infty \right\},
\]
where $\rho : \Delta \to \mathbb{R}_+$ is an \textit{admissible weight function} satisfying:
\begin{itemize}
    \item[(i)] $\rho(\cdot)$ is measurable;
    \item[(ii)] there exist constants $M \geq 1$ and $w \in \mathbb{R}$ such that
    \[
    \rho(t) \leq M e^{\omega |t'|} \rho(t + t'), \quad \text{for all } t, t' \in \Delta.
    \]
\end{itemize}
Condition (ii) ensures the translation semigroup $\{T_t\}_{t \in \Delta}$ is well-defined and strongly continuous.

To conclude the introduction, we now present the structure and main contributions of this paper. The paper is organized as follows. In Section~2, we establish a sufficient condition (Theorem~\ref{suf}) and a necessary condition (Theorem~\ref{nec}) for a $C_0$-semigroup \(\{T_{t}\}_{t \in \Delta}\) to be frequently hypercyclic. In Section~3, building on Theorem~\ref{suf} from Section~2, we derive a more practical criterion for translation semigroups \(\{T_{t}\}_{t \in \Delta}\) on $L^p_\rho(\Delta, \mathbb{K})$ spaces, expressed in terms of the integral of the weight function. As a result, we provide explicit examples of frequently hypercyclic translation semigroups on \(L^{p}_{\rho}(\Delta, \mathbb{K})\). Additionally, we present a necessary condition on the weight function for such semigroups, under which it is demonstrated that Example~\ref{exam0} is not frequently hypercyclic.

\section{Characterizations for frequent hypercyclicity }
In this section, we provide a sufficient condition in Theorem~\ref{suf} and a necessary condition in Theorem~\ref{nec} for a $C_0$-semigroup \(\{T_{t}\}_{t \in \Delta}\) to be frequently hypercyclic. 

In \cite{Bayart2006,Bonilla2007}, the authors established the well-known Frequent Hypercyclicity Criterion to demonstrate that an operator on a separable F-space is frequently hypercyclic. Inspired by these results, we present the following sufficient condition for a $C_0$-semigroup \(\{T_{t}\}_{t \in \Delta}\) to be frequently hypercyclic.

\begin{theorem}\label{suf}[Frequent Hypercyclicity Criterion ]
Let $\{T_{t}\}_{t \in \Delta}$ be a $C_0$-semigroup on a separable Banach space $X$. Assume there exist a dense subset $X_0 \subset X$ and mappings $S_t: X_0 \to X$ for $t \in \Delta$ such that the following conditions hold for all $x \in X_0$:
\begin{enumerate}[label=(\roman*), nosep]
    \item For any $\epsilon > 0$, there exists a real number $r_\epsilon > 0$ such that  
    \[
    \sup \left\{ \left\| \sum_{n \in J} T_{\lambda_n} S_{\mu_n} x \right\| : J \text{ is a finite set}, |\gamma_n| \geq r_\epsilon, \text{ and } |\gamma_n - \gamma_m| \geq 1, \, \forall n \neq m \right\} < \epsilon,
    \]  
    where $\gamma_n := |\lambda_n - \mu_n|$.

    \item $T_t S_t x \to x$ as $t \to \infty$.
\end{enumerate}

\noindent Then, the semigroup $\{T_{t}\}_{t \in \Delta}$ is frequently hypercyclic.
\end{theorem}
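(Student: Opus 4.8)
The plan is to mimic the construction behind the classical Frequent Hypercyclicity Criterion \cite{Bayart2006,Bonilla2007}, transported to the two--dimensional index set $\Delta$. First I would fix a countable dense sequence $(y_k)_{k\ge 1}$ in $X_0$ (possible since $X$ is separable and $X_0$ is dense), and aim to produce a single vector $x\in X$ together with, for each $k$, a set $A_k\subset\Delta$ of positive lower density $\underline{D}(A_k)>0$ on which $T_t x$ stays within a prescribed distance of $y_k$. Because the $y_k$ are dense, this yields $\underline{D}(\{t\in\Delta:T_tx\in U\})>0$ for every nonempty open $U$, which is exactly Definition \ref{newdef}.

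The heart of the argument is a combinatorial/geometric lemma producing the index sets. I would select a discrete net $\Lambda\subset\Delta$ and partition it into pairwise disjoint pieces $\Lambda=\bigsqcup_k\Lambda_k$ such that: (a) each $\Lambda_k$ is ``two--dimensionally dense'', i.e.\ the thickened set $A_k:=\bigcup_{c\in\Lambda_k}B(c,\delta)$ satisfies $\underline{D}(A_k)>0$ for a fixed small $\delta>0$; and (b) the points of $\Lambda$ are separated, the separation between the family $\Lambda_k$ and the rest growing with $k$, so that the moduli $\gamma=|t-c|$ arising below can be organized into finitely many subfamilies each obeying the hypotheses $\gamma_n\ge r_\epsilon$ and $|\gamma_n-\gamma_m|\ge 1$ required by condition (i). I would then set
\[
x=\sum_{k\ge 1}\ \sum_{c\in\Lambda_k} S_c\,y_k ,
\]
and first verify that this series converges in $X$, again invoking the estimate in condition (i) (with the trivial choice $\lambda_n=0$, so that $\gamma_n=|c|$) together with the separation built into $\Lambda$.

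For the verification, fix $k$ and take $t$ in the thickened set $A_k$, say $|t-c_0|<\delta$ for some $c_0\in\Lambda_k$. I would split $T_t x$ into the \emph{diagonal} term $T_t S_{c_0}y_k$ and the \emph{off-diagonal} remainder $\sum_{(l,c)\ne(k,c_0)}T_t S_c y_l$. For the diagonal term, strong continuity lets me pass from $t$ to $c_0$, and condition (ii) gives $T_{c_0}S_{c_0}y_k\to y_k$ as $c_0\to\infty$; shrinking $\delta$ and discarding the finitely many small values of $c_0$ makes this term as close to $y_k$ as desired. For the remainder, every surviving pair contributes $T_t S_c y_l$ with $\gamma=|t-c|$ bounded below (by the separation in (b)); grouping these into the subfamilies from (a)--(b) and applying condition (i) to each $y_l$ bounds the whole remainder by a small constant. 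Combining the two estimates gives $\|T_t x-y_k\|<\epsilon$ on $A_k$, and since $\underline{D}(A_k)>0$ the conclusion follows.

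The step I expect to be genuinely hard is the construction in item (b) and its interaction with condition (i). In the classical one--dimensional setting the admissible points lie on a line, so pairwise distances are essentially integers and automatically $1$-separated, which is precisely what makes condition (i) applicable term by term. In the sector $\Delta$, however, a genuinely area-dense net---forced on us by the new normalization $m(\Delta_t)=\alpha t^2$, under which one--dimensional sets have zero density---places \emph{many} points at comparable distance from a given $t$, of order $j$ of them in the annulus $\{\,c:|t-c|\in[j,j+1)\,\}$, so the moduli $\gamma=|t-c|$ cannot all be made $1$-separated at once. Reconciling the positivity of $\underline{D}(A_k)$ with the separation hypothesis of condition (i) is therefore the crux: it will require either a careful placement of the net together with a decomposition of each off-diagonal sum into controllably many $1$-separated subfamilies, or a sharpening of the bookkeeping so that the planar clustering of distances is absorbed by the smallness furnished by condition (i).
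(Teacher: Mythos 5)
Your proposal reproduces the architecture of the paper's proof — a dense sequence $(y_l)\subset X_0$, pairwise disjoint index sets of positive lower density, the vector $x=\sum S_{b_k}z_k$ whose unconditional convergence comes from condition (i) with $\lambda_n\equiv 0$, a diagonal/off-diagonal splitting of $T_tx-y_l$, and a thickening of the discrete index sets by a small sector $\Delta_\delta$ so that a discrete count becomes positive planar density under the normalization $m(\Delta_t)=\alpha t^2$. But you stop at precisely the step you yourself call the crux: the construction of the net and the verification that condition (i) applies to the off-diagonal sums. A proposal that says this step ``will require either a careful placement of the net or a sharpening of the bookkeeping'' has not proved the theorem; since, as you correctly observe, the entire difficulty of the sectorial case is concentrated there, this is a genuine gap and not a routine omission. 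For the record, the paper's construction is concrete: take the classical sets $A(l,r_l)\subset\mathbb{N}$ of positive lower density with $n\ge r_l$ and $|n-m|\ge r_l+r_k$ for $n\ne m$ (Bonilla--Grosse-Erdmann), and on each vertical segment $L_n=\{n+iy\}\cap\Delta$ with $n\in A(l,r_l)$ place $\lfloor 2n\tan\alpha/r_l\rfloor+1$ points spaced $r_l$ apart; because the number of points grows linearly in $n$, the set $B(l,r_l)+\Delta_\delta$ occupies area $\gtrsim N^2$ inside $\Delta_N$ and so has positive $\underline{D}$, while distinct net points, and a net point versus any $t$ from a different block, stay Euclidean-separated by at least $r_l$. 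Condition (i) is then invoked blockwise using exactly this separation of the points $b_k$, i.e.\ of the difference vectors $t-b_k$.

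Your objection about clustering of the scalars $|t-c|$ is in fact correct and cuts deeper than you indicate: if $\gamma_n$ is read literally as the real number $|\lambda_n-\mu_n|$ and $|\gamma_n-\gamma_m|\ge 1$ as separation of these moduli, the approach cannot work at all, because already the convergence step (with $\lambda_n\equiv 0$, $\gamma_n=|b_k|$) would force the moduli $|b_k|$ to be $1$-separated, hence only $O(N)$ net points in $\Delta_N$, hence $m(B+\Delta_\delta)=O(N)$ and $\underline{D}=0$; and for the off-diagonal sums the number of net points of an area-dense net in a unit annulus about $t$ is unbounded, so no decomposition into boundedly many moduli-separated subfamilies exists. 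The resolution is that condition (i) must be understood with the separation hypothesis $|(\lambda_n-\mu_n)-(\lambda_m-\mu_m)|\ge 1$, i.e.\ separation of the difference vectors in $\mathbb{C}$ — this is what the paper's proof actually uses when it appeals to $|b_k-b_{k'}|\ge r_j$ and $|t-b_k|\ge\max\{r_l,r_j\}$, and it is also the form that Theorem \ref{suftran} verifies for translation semigroups. Once you adopt that reading, your ``hard step'' disappears and your line-segment-over-a-syndetic-set idea for the net goes through essentially as in the paper; without settling on that reading and then exhibiting an explicit net with the required density and separation, the proof is incomplete.
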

\begin{proof}
Choose a sequence $(y_j)$ from $X_0$ that is dense in $X$. From condition $(i)$, there exist $r_l \in \mathbb{N}$ for $l \geq 1$, such that for any $j \leq l$,
\begin{equation}\label{sumTS}
    \sup\left\{\left\|\sum_{n \in J} T_{\lambda_n} S_{\mu_n} y_j\right\| : J \text{ is a finite set}, |\gamma_n| \geq r_l, \text{ and } |\gamma_n - \gamma_m| \geq 1, \, \forall n \neq m\right\} < \frac{1}{l 2^l},
\end{equation}
where $\gamma_n := |\lambda_n - \mu_n|$.

In particular, setting $\lambda_n \equiv 0$, we obtain:
\begin{equation}\label{sumS}
   \sup\left\{\left\|\sum_{n \in J} S_{t_n} y_j\right\| : J \text{ is a finite set}, |t_n| \geq r_l, \text{ and } |t_n - t_m| \geq 1, \, \forall n \neq m\right\} < \frac{1}{l 2^l} 
\end{equation}
for any $j \leq l$.

Without loss of generality, we may assume (from condition $(ii)$) that:
\begin{equation}\label{con3}
  \|T_t S_t y_l - y_l\| < \frac{1}{2^l}, \quad \forall |t| \geq r_l, \, \forall l \in \mathbb{N}_0.  
\end{equation}

By Lemma 2.5 in \cite{Bonilla2007} and Lemma 9.5 in \cite{Peris}, there exist pairwise disjoint subsets $A(l, r_l) \subset \mathbb{N}$, $l \in \mathbb{N}$, of positive lower density such that for any $n \in A(l, r_l)$ and $m \in A(k, r_k)$,
\[
n \geq r_l \quad \text{and} \quad |n - m| \geq r_l + r_k \quad \text{if } n \neq m.
\]

Fix $l \geq 1$ and $n \in A(l, r_l)$. Let $\Delta=\Delta(\alpha).$ Without loss of generality, we assume that the length $R_n$ of the line segment $L_n := \{x + i y \in \Delta : x = n\}$ given by $R_n := 2n \tan \alpha$, satisfies $R_n > r_l$. We can select $\lfloor R_n / r_l \rfloor + 1$ evenly spaced points along $L_n$, given by:
\[
n + i n \tan \alpha, \, n + i(n \tan \alpha - r_l), \, \dots, \, n + i(n \tan \alpha - \lfloor R_n / r_l \rfloor r_l),
\]
denoted as $B^n := \{b_1^n, b_2^n, \dots, b_{\lfloor R_n / r_l \rfloor + 1}^n\}$.

As $l$ varies over $\mathbb{N}$ and $n$ varies over $A := \bigcup_{l \in \mathbb{N}} A(l, r_l)$, we construct sequences $B^n, n \in A$. Concatenating these sequences in the order of $n$ produces a new sequence:
\[
(b_k) := \bigcup_{n \in A} B^n.
\]

Define $B(l, r_l) := \bigcup_{n \in A(l, r_l)} B^n$ for $l = 1, 2, \dots$. It is straightforward to verify that the sets $B(l, r_l)$, $l \geq 1$, inherit the separation property of the sets $A(l, r_l)$. Specifically, for any distinct $b_k \in B(v, r_v)$ and $b_j \in B(\mu, r_\mu)$, we have:
\[
|b_k - b_j| \geq r_v.
\]

We construct a frequently hypercyclic vector \(x\) for the semigroup \(\{T_t\}_{t \in \Delta}\) by setting \(z_k = y_l\) if \(b_k \in B(l, r_l)\), and defining:
\begin{equation}\label{fcvector}
x = \sum_{k \in \mathbb{N}} S_{b_k} z_k.
\end{equation}
Next, we will prove two claims: first, that \(x\) is well-defined, and second, that \(x\) is frequently hypercyclic.

\section*{Step 1: Well-Definedness of the vector x}

We begin by showing that the series \(x = \sum_{k \in \mathbb{N}} S_{b_k} z_k\) is well-defined.

From the construction of \((b_k)\), for any \(l \in \mathbb{N}\), there exists an integer \(N_l\) such that \(|b_k| \geq r_l\) for all \(k \geq N_l\). For any finite set \(F \subset \mathbb{N}\), the partial sum can be expressed as:
\[
\sum_{k \in F} S_{b_k} z_k = \sum_{j=1}^\infty \sum_{\substack{b_k \in B(j, r_j) \\ k \in F}} S_{b_k} y_j.
\]
We further split this into two parts:
\[
\sum_{k \in F} S_{b_k} z_k = \sum_{j=1}^l \sum_{\substack{b_k \in B(j, r_j) \\ k \in F}} S_{b_k} y_j + \sum_{j=l+1}^\infty \sum_{\substack{b_k \in B(j, r_j) \\ k \in F}} S_{b_k} y_j.
\]

From the construction of \((b_k)\), we know that \(|b_k - b_{k'}| \geq r_j\) for any distinct pair \(b_k, b_{k'} \in B(j, r_j)\). By inequality \((\ref{sumS})\), for any \(j \leq l\) and finite \(F \subset \{N_l, N_l + 1, N_l + 2, \dots\}\), we have:
\[
\left\| \sum_{\substack{b_k \in B(j, r_j) \\ k \in F}} S_{b_k} y_j \right\| < \frac{1}{l 2^l}.
\]

Furthermore, since \(|b_k| \geq r_j\) for all \(b_k \in B(j, r_j)\), we also have, for any \(j \geq 1\) and any finite set \(F\),
\[
\left\| \sum_{\substack{b_k \in B(j, r_j) \\ k \in F}} S_{b_k} y_j \right\| < \frac{1}{j 2^j}.
\]

Combining these results, for any finite set \(F \subset \{N_l, N_l + 1, N_l + 2, \dots\}\), we obtain the total sum:
\[
\left\| \sum_{k \in F} S_{b_k} z_k \right\| \leq \sum_{j=1}^l \frac{1}{l 2^l} + \sum_{j=l+1}^\infty \frac{1}{j 2^j}< \frac{1}{2^l} + \frac{1}{2^l} = \frac{2}{2^l}.
\]

Since \(l\) is arbitrary, the series \((\ref{fcvector})\) converges unconditionally, proving that \(x\) is well-defined.
\section*{Step 2: the vector x is Frequently Hypercyclic}

We now show that \(x\) is frequently hypercyclic for the semigroup \(\{T_t\}_{t \in \Delta}\). To this end, fix \(l \geq 1\). For \(t \in B(l, r_l)\), we have:
\[
T_t x - y_l = \sum_{j=1}^{l-1} \sum_{b_k \in B(j, r_j)} T_t S_{b_k} y_j + \sum_{\substack{b_k \in B(l, r_l) \\ b_k \neq t}} T_t S_{b_k} y_l + \sum_{j=l+1}^\infty \sum_{b_k \in B(j, r_j)} T_t S_{b_k} y_j + T_t S_t y_l - y_l.
\]

For all \(j \neq l\), we have \(|t - b_k| \geq \max\{r_l, r_j\}\) and \(|b_k - b_{k'}| \geq r_j\) for any distinct \(b_k, b_{k'} \in B(j, r_j)\). Using inequality \((\ref{sumTS})\), it follows that:
\[
\left\| \sum_{b_k \in B(j, r_j)} T_t S_{b_k} y_j \right\| \leq \frac{1}{l 2^l}, \quad \forall j < l,
\]
and
\[
\left\| \sum_{b_k \in B(j, r_j)} T_t S_{b_k} y_j \right\| \leq \frac{1}{j 2^j}, \quad \forall j \geq l+1.
\]

For \(b_k \in B(l, r_l)\) with \(b_k \neq t\), we have \(|t - b_k| \geq r_l\) and \(|b_k - b_{k'}| \geq r_l\) for distinct \(b_k, b_{k'} \in B(l, r_l)\). Again, by \((\ref{sumTS})\),
\[
\left\| \sum_{\substack{b_k \in B(l, r_l) \\ b_k \neq t}} T_t S_{b_k} y_l \right\| \leq \frac{1}{l 2^l}.
\]

Additionally, since \(t \geq r_l\), it follows from condition \((\ref{con3})\) that:
\[
\left\| T_t S_t y_l - y_l \right\| \leq \frac{1}{2^l}.
\]

Combining these results, for all \(t \in B(l, r_l)\),
\[
\left\| T_t x - y_l \right\| \leq \frac{1}{ 2^l} + \frac{1}{ 2^l} + \frac{1}{2^l} = \frac{3}{2^l}.
\]

Let \(U \subset X\) be a non-empty open subset. Since \(X\) has no isolated points and \((y_l)_{l \in \mathbb{N}}\) is dense in \(X\), there exist some \(c > 0\) and infinitely many \(y_l\) with \(l \in I \subset \mathbb{N}\) such that:
\begin{equation}\label{v}
V(y_l, c) \subset U, \quad \forall l \in I,
\end{equation}
where \(V(y_l, c)\) denotes the open ball centered at \(y_l\) with radius \(c\).

By the local equicontinuity of \(\{T_t\}_{t \in \Delta}\), there exists a sufficiently large \(l_0 \in I\) such that:
\[
\|T_s z\| \leq \frac{c}{2}, \quad \forall \|z\| \leq \frac{3}{2^{l_0}}, \ |s| \leq 1.
\]

Since \(\{T_t\}_{t \in \Delta}\) is strongly continuous at \(y_{l_0}\), there exists some \(0 < \delta_0 < 1\) such that:
\[
\|T_s y_{l_0} - y_{l_0}\| \leq \frac{c}{2}, \quad \forall |s| \leq \delta_0.
\]

Now, for any \(t \in B(l_0, r_{l_0})\) and \(s \in \Delta_{\delta_0}\), we have:
\begin{equation}\label{c}
    \begin{aligned}
\|T_{t+s} x - y_{l_0}\| & \leq \|T_{t+s} x - T_s y_{l_0}\| + \|T_s y_{l_0} - y_{l_0}\| \\
                        & = \|T_s (T_t x - y_{l_0})\| + \|T_s y_{l_0} - y_{l_0}\| \\
                        & \leq \frac{c}{2} + \frac{c}{2} = c.
\end{aligned}
\end{equation}
From (\ref{v}) and (\ref{c}), it follows that:
\[
T_{t+s} x \in U, \quad \forall t \in B(l_0, r_{l_0}), \ \forall s \in \Delta_{\delta_0}.
\]

It remains to show that:
\[
\underline{D}(B(l_0, r_{l_0}) + \Delta_{\delta_0}) > 0.
\]
In fact, we can prove that:
\[
\underline{D}(B(l, r_l) + \Delta_\delta) > 0, \quad \forall l \in \mathbb{N}, 0 < \delta < 1.
\]

Fix \(l \in \mathbb{N}\) and \(0 < \delta < 1\). Let \(d_l := \text{dens}(A(l, r_l)) > 0\). For any \(\epsilon > 0\), the intersection \(A(l, r_l) \cap [0, N]\) contains at least \(\lfloor (d_l - \epsilon) N \rfloor\) integers for sufficiently large \(N \in \mathbb{N}\). By the construction of \(B(l, r_l)\), it consists of at least:
\[
\frac{\left(1 + \lfloor \frac{2 \lfloor (d_l - \epsilon) N \rfloor \tan \alpha}{r_l} \rfloor\right) \lfloor (d_l - \epsilon) N \rfloor}{2}
\]
points, leading to:
\[
\begin{aligned}
m(\left\{t\in B(l,r_{l})+\Delta_{\delta}:|t|\leq N+1 \right\}) & \geq \frac{\left(1+\lfloor \frac{2\lfloor (d_{l}-\epsilon)N \rfloor \tan\alpha}{r_{l}}\rfloor\right)\lfloor (d_{l}-\epsilon)N \rfloor}{2} \alpha\delta^{2}\\
& \geq \frac{((d_{l}-\epsilon)N-1)^{2} \alpha \delta^{2}\tan\alpha}{r_{l}}.
\end{aligned}
\]

Then we have that
\begin{equation}\label{liminfN}
\begin{aligned}
\liminf_{N\rightarrow \infty} \frac{m\left\{t\in B(l,r_{l})+\Delta_{\delta}:|t|\leq N\right\}}{\alpha N^{2}}& = \liminf_{N\rightarrow \infty} \frac{m\left\{t\in B(l,r_{l})+\Delta_{\delta}:|t|\leq N+1\right\}}{\alpha N^{2}}\\
       &\geq \frac{(d_{l}-\epsilon)  \delta^{2}\tan\alpha}{r_{l}}.
\end{aligned}
\end{equation}

Since $\epsilon>0$ was arbitrary, from (\ref{liminfN}) we obtain that
$$\liminf_{N\rightarrow \infty} \frac{m\left\{t\in B(l,r_{l})+\Delta_{\delta}:|t|\leq N\right\}}{\alpha N^{2}}\geq \frac{d_{l} \delta^{2}\tan\alpha}{r_{l}}.$$

 Hence, we have that
\begin{equation}\label{N1}
\underline{D}(B(l, r_l) + \Delta_\delta)=\liminf_{N\rightarrow \infty} \frac{m\left\{t\in B(l,r_{l})+\Delta_{\delta}:|t|\leq N\right\}}{\alpha N^{2}}>0.
\end{equation}

Thus, \(x\) is frequently hypercyclic.

\end{proof}
In \cite{Hesf,He2021}, the \(\Fc\)-transitivity of $C_0$-semigroups $\{T_{t}\}_{t \in \Delta}$ was characterized. Recall that a collection $\mathcal{F}$ of subsets of $\Delta$ is called a \textit{Furstenberg family} if it is hereditarily upward, that is, 
\[
A \in \mathcal{F}, \, B \supseteq A \implies B \in \mathcal{F}.
\]

A $C_0$-semigroup $\{T_t\}_{t \in \Delta}$ on $X$ is said to be \textit{$\mathcal{F}$-transitive} if for any non-empty open sets $U, V \subset X$, the meeting time set \[
N(U, V) := \{t \in \Delta : T_t U \cap V \neq \emptyset\}\in \Fc.
\] 

In the following theorem, we will show that if $\{T_{t}\}_{t \in \Delta}$ is frequently hypercyclic, then it is \(\Fc_{pld}\)-transitive, where $\Fc_{pld}$ denotes the Furstenberg family of all subsets of $\Delta$ having positive lower density, i.e. \(\Fc_{pld}:=\left\{A \subset \Delta: \underline{D}(A)>0\right\}\).
\begin{theorem}\label{nec}
    Let $\{T_{t}\}_{t \in \Delta}$ be a $C_0$-semigroup on a separable Banach space $X$. If $\{T_{t}\}_{t \in \Delta}$ is frequently hypercyclic, then it is \(\Fc_{pld}\)-transitive.
\end{theorem}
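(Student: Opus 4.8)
The plan is to build the meeting time sets $N(U,V)$ from a single frequently hypercyclic vector. Fix nonempty open sets $U,V\subset X$ and let $x$ be a frequently hypercyclic vector. Since $\underline{D}\bigl(\{t\in\Delta:T_tx\in U\}\bigr)>0$, this set is in particular nonempty, so there is some $t_0\in\Delta$ with $T_{t_0}x\in U$. Put $A:=N(x,V)=\{t\in\Delta:T_tx\in V\}$, which by hypothesis satisfies $\underline{D}(A)>0$. The goal is then to show that $N(U,V)$ contains a suitable translate of $A$ and that shifting by a fixed vector of $\Delta$ cannot destroy positive lower density.

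For the first point I would invoke the semigroup law: if $s\in A$ satisfies $s-t_0\in\Delta$, then setting $t:=s-t_0$ gives $T_t(T_{t_0}x)=T_sx\in V$ while $T_{t_0}x\in U$, so $T_tU\cap V\ni T_sx$ and hence $t\in N(U,V)$. Consequently
\[
N(U,V)\ \supseteq\ \{\,s-t_0 : s\in A,\ s-t_0\in\Delta\,\}\ =\ (A-t_0)\cap\Delta\ =:\ B.
\]
Because $\underline{D}$ is monotone under inclusion and $\Fc_{pld}$ is hereditarily upward, it suffices to prove $\underline{D}(B)>0$.

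The technical heart — and the step I expect to be the main obstacle — is the claim $\underline{D}(B)\ge\underline{D}(A)>0$, i.e. that the boundary of the sector costs us only a lower-order amount when we shift by $t_0$. Here I would use that, for $0<\alpha\le\pi/2$, the sector $\Delta$ is a convex cone, so that $t_0+\Delta\subseteq\Delta$. Translating $B\cap\Delta_t$ by $t_0$ (a measure-preserving map) identifies its measure with $m\bigl(A\cap(t_0+\Delta)\cap\{v:|v-t_0|\le t\}\bigr)$, and since $|v|\le t-|t_0|$ forces $|v-t_0|\le t$, one obtains
\[
m(B\cap\Delta_t)\ \ge\ m(A\cap\Delta_{t-|t_0|})-m\bigl(\Delta_{t-|t_0|}\setminus(t_0+\Delta)\bigr).
\]
The loss term is controlled linearly: from $t_0+\Delta_{s-|t_0|}\subseteq\Delta_s\cap(t_0+\Delta)$ one gets $m(\Delta_s\setminus(t_0+\Delta))\le\alpha s^2-\alpha(s-|t_0|)^2\le 2\alpha|t_0|s$, which is $O(s)$ and hence negligible against $\alpha t^2$.

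Dividing by $\alpha t^2$ and letting $t\to\infty$, the loss term contributes $0$, while
\[
\frac{m(A\cap\Delta_{t-|t_0|})}{\alpha t^2}=\frac{m(A\cap\Delta_{t-|t_0|})}{\alpha(t-|t_0|)^2}\cdot\frac{(t-|t_0|)^2}{t^2},
\]
whose $\liminf$ is at least $\underline{D}(A)$ because the second factor tends to $1$. This yields $\underline{D}(B)\ge\underline{D}(A)>0$, and therefore $\underline{D}(N(U,V))>0$, i.e. $N(U,V)\in\Fc_{pld}$. As $U,V$ were arbitrary, $\{T_t\}_{t\in\Delta}$ is $\Fc_{pld}$-transitive. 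The only delicate points are the convex-cone fact $t_0+\Delta\subseteq\Delta$ and the linear bound on the lost boundary region $\Delta_s\setminus(t_0+\Delta)$; the remainder is routine bookkeeping with the semigroup law and the definition of $\underline{D}$.
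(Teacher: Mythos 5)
Your proof is correct and follows essentially the same route as the paper: fix $t_0\in N(x,U)$ with $T_{t_0}x\in U$ and observe via the semigroup law that $(N(x,V)-t_0)\cap\Delta\subseteq N(U,V)$, then reduce to showing the shifted set retains positive lower density. The only difference is that the paper dismisses the key step $\underline{D}(N(x,V))=\underline{D}(N(x,V)-t_0)$ as ``not hard to check,'' whereas you actually supply the convex-cone inclusion $t_0+\Delta\subseteq\Delta$ and the $O(s)$ bound on $m\bigl(\Delta_s\setminus(t_0+\Delta)\bigr)$ that justify it.
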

\begin{proof}
Let \(x\in X\) be a frequently hypercyclic vector of $\{T_{t}\}_{t \in \Delta}$. For any non-empty open subsets \(U,V\subset X\), we only need to show that \[\underline{D}(N(U,V))>0.\] 

Denote \(N(x,U):=\left\{t\in \Delta:T_tx\in U\right\}\) and \(N(x,V):=\left\{t\in \Delta:T_tx\in V\right\}.\) Let \(t_0\in N(x,U)\). From the definition of the lower density, it is not hard to check that \[\underline{D}(N(x,V))=\underline{D}(N(x,V)-t_0)\] where \[N(x,V)-t_0:=\left\{s\in \Delta:s=t-t_0\text{ for some }t\in N(x,V)\right\}.\]

For any \(s=t-t_0\in N(x,V)-t_0,\) it is not hard to see that \[ T_sT_{t_{0}}x=T_t x\in V.\]

Note that \(T_{t_{0}}x\in U.\) Then \[N(x,V)-t_0\subset N(U,V),\] and \[\underline{D}(N(U,V))>0.\]
\end{proof}

\section{Frequently hypercyclic translation semigroups on complex sectors}\label{sec:3}

In this section, we discuss the frequent hypercyclicity of translation semigroups on complex sectors. Specifically, we provide a sufficient condition, based on the integral of the weight function, for a translation semigroup \(\left\{T_{t}\right\}_{t \in \Delta}\) on \(L^{p}_{\rho}(\Delta,\mathbb{K})\) to be frequently hypercyclic, as stated in Theorem~\ref{suftran}. Using this criterion, we demonstrate that the translation semigroups from Example~\ref{exam1} to Example~\ref{exam3} is frequently hypercyclic. At last, we present a necessary condition on the weight function for the translation semigroups, under which we prove that Example~\ref{exam0} is not frequently hypercyclic.
\begin{theorem}\label{suftran}
Assume that \( 1 \leq p < \infty \) and \(\left\{T_{t}\right\}_{t \in \Delta}\) is a translation semigroup on \(X:=L^{p}_{\rho}(\Delta,\mathbb{K}).\) If 
    \[
    \int_{\Delta} \rho(s) \, ds < \infty
    \]
then the translation semigroup \(\left\{T_{t}\right\}_{t \in \Delta}\) satisfies the Frequent Hypercyclicity Criterion and is frequently hypercyclic on \( X \).
\end{theorem}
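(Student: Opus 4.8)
The plan is to verify the two hypotheses of the Frequent Hypercyclicity Criterion (Theorem~\ref{suf}). As the dense subset I would take $X_0$ to be the space of bounded measurable functions on $\Delta$ with compact support; since $\rho \in L^1(\Delta)$, the measure $\rho(y)\,dy$ is a finite Radon measure and $X_0$ is dense in $X = L^p_\rho(\Delta,\mathbb{K})$. For the maps $S_t$ I would use the ``forward'' translation
\[
(S_t f)(y) := \begin{cases} f(y - t), & y - t \in \Delta, \\ 0, & \text{otherwise}, \end{cases} \qquad t \in \Delta .
\]
For $f\in X_0$ with support $K$ one has $\|S_t f\|_p^p = \int_K |f(u)|^p \rho(u+t)\,du<\infty$, so $S_t\colon X_0\to X$ is well defined. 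Because $\Delta$ is a convex cone, $y+t\in\Delta$ whenever $y\in\Delta$, and a direct computation gives $(T_t S_t f)(y) = (S_t f)(y+t) = f(y)$ for every $y \in \Delta$. Hence $T_t S_t x = x$ identically, so condition (ii) holds trivially and it remains only to establish the summability condition (i).

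Fix $x\in X_0$ with support $K$, and set $\nu_n := \mu_n - \lambda_n$, so that $\gamma_n = |\nu_n|$. The same computation shows $(T_{\lambda_n} S_{\mu_n} x)(y) = x(y - \nu_n)$ for $y\in (K+\nu_n)\cap\Delta$ and $0$ otherwise; in particular $T_{\lambda_n}S_{\mu_n}x$ is supported in $(K+\nu_n)\cap\Delta$ and
\[
\|T_{\lambda_n}S_{\mu_n}x\|_p^p \le \|x\|_\infty^p \int_{(K+\nu_n)\cap\Delta}\rho(y)\,dy .
\]
The key geometric observation is that the hypothesis $|\gamma_n-\gamma_m|\ge 1$ only constrains the moduli, but by the reverse triangle inequality $|\nu_n-\nu_m|\ge \big||\nu_n|-|\nu_m|\big| = |\gamma_n-\gamma_m|\ge 1$, so the points $\nu_n$ are $1$-separated in $\mathbb{C}$.

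From this $1$-separation I would run a packing (bounded-overlap) argument. Writing $\rho_0 := \max_{u\in K}|u|$ and $d := \operatorname{diam}(K)$, a point $y$ lies in $K+\nu_n$ only if $\nu_n \in y - K$, a set of diameter $d$; since the $\nu_n$ are $1$-separated, at most $C = C(d)$ of them lie there, whence $\sum_n \chi_{(K+\nu_n)\cap\Delta}\le C$ pointwise. Moreover $|\nu_n|\ge r_\epsilon$ forces every support into $\Delta\setminus\Delta_{r_\epsilon-\rho_0}$. Combining bounded overlap with the elementary bound $\big|\sum_{i=1}^C a_i\big|^p\le C^{p-1}\sum_i|a_i|^p$ gives
\[
\Big\|\sum_{n\in J} T_{\lambda_n}S_{\mu_n}x\Big\|_p^p \le C^{p-1}\sum_{n\in J}\|T_{\lambda_n}S_{\mu_n}x\|_p^p \le C^{p}\,\|x\|_\infty^p \int_{\Delta\setminus\Delta_{r_\epsilon-\rho_0}}\rho(y)\,dy .
\]
Since $\int_\Delta\rho<\infty$, the tail $\int_{\Delta\setminus\Delta_R}\rho\to 0$ as $R\to\infty$, so choosing $r_\epsilon$ large (depending on $x$ and $\epsilon$, which the statement of Theorem~\ref{suf} permits) makes the right-hand side $<\epsilon^p$ uniformly over all admissible finite $J$. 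This verifies (i), and Theorem~\ref{suf} yields frequent hypercyclicity.

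The main obstacle is precisely the mismatch between the one-dimensional separation furnished by the criterion (a lower bound on the gaps between the moduli $\gamma_n$) and the two-dimensional geometry of the sector: the supports $(K+\nu_n)\cap\Delta$ need not be pairwise disjoint, so one cannot simply add up the integrals of $\rho$ over disjoint regions. Converting modulus-separation into genuine planar separation via the reverse triangle inequality, and then absorbing the residual overlaps through a packing constant $C(\operatorname{diam} K)$, is the crux; once this is in place, the finiteness of $\int_\Delta\rho$ does the rest by making the far tail negligible.
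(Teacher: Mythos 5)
Your proposal is correct and follows essentially the same route as the paper's proof: the same dense class of compactly supported functions, the same backward maps $S_t$ with $T_tS_tf=f$ on $X_0$ (so condition (ii) is automatic), the same support computation for $T_{\lambda_n}S_{\mu_n}f$, and the same bounded-overlap argument obtained from the reverse triangle inequality $|\nu_n-\nu_m|\ge|\gamma_n-\gamma_m|\ge 1$ together with the vanishing of the tail $\int_{\Delta\setminus\Delta_R}\rho$. The only divergence is the final aggregation step: you apply the pointwise overlap bound to $p$-th powers before integrating (yielding the constant $C^{p}$), whereas the paper first applies the norm triangle inequality and then inserts the overlap constant $N$ into the resulting sum of $1/p$-th powers of tail integrals --- your version is the tighter one and the one that is cleanly justified when $p>1$.
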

\begin{proof}
We show that the translation semigroup $\left\{T_{t}\right\}_{t\in \Delta}$ with     
\[
    \int_{\Delta} \rho(s) \, ds < \infty,
    \] satisfies the Frequent Hypercyclicity Criterion.  Let 
$$X_{0}:=\left\{f\in X:f \text{ is continuous with compact support}\right\},$$
which is dense in $X.$
For any $t\in \Delta$ and any $f\in X_{0},$ define $S_{t}f$ as follows
\begin{equation*}
S_{t}f(\tau):=\left\{
\begin{array}{lcl}
 f(\tau-t)&       & \text{ if }  \tau\in t+\Delta\\
0&       &\text{ otherwise}.
\end{array} \right.
\end{equation*}

Clearly, $T_{t}S_{t}f=f,\forall f\in X_{0},t\in \Delta.$ It suffices to show that condition $(i)$ of the Frequent Hypercyclicity Criterion (Theorem \ref{suf}) is satisfied. 

Fix $f\in X_{0}.$ Let $s_{f}:=\sup_{t\in {\rm supp}f}|t|$ and $M_{f}:=\max_{\tau\in \Delta} |f(\tau)|.$ For any $\lambda,\mu,\tau\in \Delta,$ we have that
\begin{equation*}
T_{\lambda}S_{\mu}f(\tau) =\left\{
\begin{array}{lcl}
 f(\tau+\lambda-\mu)&       & \text{ if }  \tau+\lambda\in \mu+\Delta\\
0&       &\text{ otherwise}.
\end{array} \right.
\end{equation*}
Then
\begin{equation}
\begin{aligned}
\Vert T_{\lambda}S_{\mu}f\Vert^{p}&=\int_{\left\{\tau\in \Delta:\tau+\lambda-\mu\in \Delta\right\}}|f(\tau+\lambda-\mu)|^{p}\rho(\tau)d\tau\\
&=\int_{\left\{\tau\in \Delta: \tau+\lambda-\mu\in {\rm supp} f\right\}}|f(\tau+\lambda-\mu)|^{p}\rho(\tau)d\tau\\
&\leq M_{f}^{p}\int_{\left\{\tau\in \Delta: \tau\in {\rm supp} f+\mu-\lambda\right\}}\rho(\tau)d\tau
\end{aligned}
\end{equation}
 Let $(\lambda_{n}),(\mu_{n})$ be sequences in $\Delta$ with that $|\gamma_{n}-\gamma_{m}|\geq 1,\forall n\neq m$ where $\gamma_{n}=|\lambda_{n}-\mu_{n}|.$ For any finite subset $J\subset \Nb$, 
\begin{equation}\label{finite}
\Vert\sum_{n\in J}T_{\lambda_n}S_{\mu_n}f\Vert\leq \sum_{n\in J}\Vert T_{\lambda_n}S_{\mu_n}f\Vert\leq\sum_{n\in J} M_{f}\left(\int_{\left\{\tau\in \Delta: \tau\in {\rm supp} f+\mu_n-\lambda_n\right\}}\rho(\tau)d\tau\right)^{1/p}.
\end{equation}

Since $|\gamma_{n}-\gamma_{m}|\geq 1,\forall n\neq m$ where $\gamma_{n}=|\lambda_{n}-\mu_{n}|$, then there exists a finite number $N\in \Nb$, such that for any finite subset $J\subset \Nb$ and any complex number $\tau \in \Delta$, 

 $\tau\in {\rm supp}f+\mu_n-\lambda_n$ for at most $N$ many numbers $n\in J$. 
 For any $\epsilon>0,$ let $r_\epsilon>0$ be such that 
\begin{equation}\label{rho}
\left(\int_{\left\{\tau\in \Delta: |\tau|\geq |t|-s_{f}\right\}}\rho(\tau)d\tau\right)^{1/p}<\frac{\epsilon}{M_{f}N},
\end{equation}
whenever $|t|\geq r_{\epsilon}.$ 

Note that $\tau+\lambda_n-\mu_n\in {\rm supp}f$ implies that $|\tau|>|\lambda_n-\mu_n|-s_{f}.$
Suppose that $|\lambda_n-\mu_n|\geq r_{\epsilon}.$ Then from (\ref{finite}) and (\ref{rho}) ,  we have that
\begin{equation}
\begin{aligned}
\Vert\sum_{n\in J}T_{\lambda_n}S_{\mu_n}f\Vert&\leq \sum_{n\in J} M_{f}\left(\int_{\left\{\tau\in \Delta: \tau\in {\rm supp} f+\mu_n-\lambda_n\right\}}\rho(\tau)d\tau\right)^{1/p}\\
&\leq M_{f}N\left(\int_{\left\{\tau\in \Delta: |\tau|\geq r_\epsilon-s_{f}\right\}}\rho(\tau)d\tau\right)^{1/p}<\epsilon.
\end{aligned}
\end{equation}

Now we have shown that for any fixed $f\in X_0$ and any fixed $\epsilon>0$, there exists some $r_{\epsilon}>0$ such that $$\sup\left\{\Vert\sum_{n\in J}T_{\lambda_{n}}S_{\mu_{n}}f\Vert:J \text{ is a finite set}, |\gamma_{n}|\geq r_{\epsilon} \text{ and } |\gamma_{n}-\gamma_{m}|\geq 1, n\neq m\right\}<\epsilon,$$
where $\gamma_{n}:=|\lambda_{n}-\mu_{n}|,$ which means that Frequent Hypercyclicity Criterion is satisfied. 
\end{proof}
From Theorem \ref{suftran}, we can easily obtain the following examples of frequently hypercyclic translation semigroups on complex sectors.
\begin{example}\label{exam1}
Let $\rho(\tau)=e^{-|\tau|^2},\forall \tau \in \Delta$, and $1\leq p<\infty.$ The translation semigroup $\left\{T_{t}\right\}_{t\in \Delta}$ on $X=L^{p}_{\rho}(\Delta, \Kb)$ is frequently hypercyclic.
\end{example}
\begin{proof}
It is clear that 
 \[
    \int_{\Delta} \rho(s) \, ds < \infty.
    \]
From Theorem \ref{suftran}, the translation semigroup $\left\{T_{t}\right\}_{t\in \Delta}$satisfies the Frequent Hypercyclicity Criterion and is frequently hypercyclic on \( X\).
\end{proof}
\begin{example}\label{exam2}
Let $\rho(\tau)=e^{-|\tau|},\forall \tau \in \Delta$, and $1\leq p<\infty.$ The translation semigroup $\left\{T_{t}\right\}_{t\in \Delta}$ on $X=L^{p}_{\rho}(\Delta, \Kb)$ is frequently hypercyclic.
\end{example}
\begin{proof}
The proof follows directly from the observation  
\[
\int_{\Delta} \rho(s) \, ds < \infty,
\]
combined with Theorem \ref{suftran}.
\end{proof}
\begin{example}\label{exam3}
Let $1\leq p<\infty$ and
\begin{equation*}
\rho(\tau):=\left\{
\begin{array}{lcl}
1&       & \text{ if }  \tau\in \Delta, |\tau|\leq 1\\
\frac{1}{|\tau|^{3}}&       &\text{ otherwise}.
\end{array} \right.
\end{equation*}
Then the translation semigroup $\left\{T_{t}\right\}_{t\in \Delta}$ on $X=L^{p}_{\rho}(\Delta, \Kb)$
is frequently hypercyclic.
\end{example}
\begin{proof}
The proof follows directly from the observation  
\[
\int_{\Delta} \rho(s) \, ds < \infty
\]
and Theorem \ref{suftran}.
\end{proof}

In Theorem \ref{nec}, we show that a frequently hypercyclic $C_0$-semigroup $\{T_{t}\}_{t \in \Delta}$ is \(\Fc_{pld}\)-transitive. Theorem 6 \cite{Hesf} characterized the \(\Fc\)-transitivity of translation semigroups $\{T_{t}\}_{t \in \Delta}$. Combined with Theorem \ref{nec} and Theorem 6 \cite{Hesf}, we can obtain the following necessary condition of frequently hypercyclic translation semigroups. 

Before presenting the following proposition, we revisit some definitions. 
Let $\mathcal{F}$ be a Furstenberg family. A subset $A \subset \Delta$ is called a \textit{thick $\mathcal{F}$-set}, if for any compact subset $K \subset \Delta$, there exists a $B \in \mathcal{F}$ such that $K+B \subset A$. The Furstenberg family consisting of all the thick $\mathcal{F}$-sets is denoted by $\mathcal{F}^{\text{thick}}$. Clearly, each thick $\mathcal{F}$-set must be a $\Fc$-set, that is, $\mathcal{F}^{\text{thick}}\subset \Fc.$ 

\begin{proposition}\label{nectrans}
        Let $\{T_t\}_{t \in \Delta}$ be the translation semigroup on $X = L^p_\rho(\Delta,\mathbb{K})$. If $\{T_{t}\}_{t \in \Delta}$ is frequently hypercyclic, then for any $\epsilon > 0$, $\{t \in \Delta : \rho(t) \leq \epsilon\}$ is a thick $\Fc_{pld}$-set in $\Delta$.
\end{proposition}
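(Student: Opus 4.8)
The plan is to chain the two results recalled just above the statement and then carry out a single analytic upgrade. First I would apply Theorem~\ref{nec}: since $\{T_t\}_{t\in\Delta}$ is frequently hypercyclic, it is $\Fc_{pld}$-transitive, so $N(U,V)$ has positive lower density for every pair of non-empty open sets $U,V\subset X$. Feeding this into the characterization of $\Fc$-transitivity for translation semigroups (Theorem~6 of \cite{Hesf}), specialized to the family $\Fc=\Fc_{pld}$, converts the dynamical hypothesis into a statement about the weight alone: for every compact $K\subset\Delta$ and every $\eta>0$, the average-smallness set
\[
B_{K,\eta}:=\Big\{t\in\Delta:\ \int_{K}\rho(s+t)\,ds<\eta\Big\}
\]
has positive lower density, i.e.\ $B_{K,\eta}\in\Fc_{pld}$.

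The substantive step is to pass from this averaged smallness of $\rho$ to the pointwise smallness demanded by the definition of a thick set, and the admissibility of $\rho$ is exactly the tool that makes this possible. Fix $\epsilon>0$ and a compact $K\subset\Delta$, pick a small $r>0$, and replace $K$ by its closed $r$-neighbourhood $K_r:=\{\tau\in\Delta:\operatorname{dist}(\tau,K)\le r\}$, which is again compact. Condition~(ii) of admissibility, applied to the pair $(\tau_0+t,\,s-\tau_0)$, gives $\rho(\tau_0+t)\le M e^{\omega|s-\tau_0|}\rho(s+t)\le M e^{|\omega|r}\rho(s+t)$, hence $\rho(s+t)\ge \tfrac1M e^{-|\omega|r}\rho(\tau_0+t)$ whenever $|s-\tau_0|\le r$. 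For $\tau_0\in K$ the Euclidean disk satisfies $D(\tau_0,r)\cap\Delta\subset K_r$, so integrating the previous bound over this disk yields
\[
\int_{K_r}\rho(s+t)\,ds\ \ge\ \tfrac1M e^{-|\omega|r}\,m\big(D(\tau_0,r)\cap\Delta\big)\,\rho(\tau_0+t)\ \ge\ \tfrac{c_r}{M}e^{-|\omega|r}\,\rho(\tau_0+t),
\]
where $c_r:=\inf_{\tau\in\Delta}m\big(D(\tau,r)\cap\Delta\big)>0$; since $\Delta$ is convex ($2\alpha\le\pi$), this infimum is controlled by the vertex and is a positive multiple of $r^2$.

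With this inequality I would then choose $\eta:=c_r\,\epsilon\,e^{-|\omega|r}/M$ and set $B:=B_{K_r,\eta}$. The first step gives $B\in\Fc_{pld}$, and for every $\tau_0\in K$ and every $t\in B$ the displayed estimate forces $\rho(\tau_0+t)\le\epsilon$. Therefore $K+B\subset\{t'\in\Delta:\rho(t')\le\epsilon\}$ with $B$ of positive lower density, which is precisely the defining property of a thick $\Fc_{pld}$-set; as $K$ was an arbitrary compact set this proves the claim for the given $\epsilon$, and $\epsilon>0$ was arbitrary. The main obstacle is this integral-to-pointwise passage: without admissibility there is no reason an $L^1$-average control of $\rho$ over a neighbourhood should produce a pointwise bound, and it is the sign-robust use of~(ii) together with the uniform lower bound $c_r>0$ on the measure of small sector-caps that closes the gap. (If the version of Theorem~6 in \cite{Hesf} is already stated with a supremal rather than an integral smallness condition, the upgrade step is immediate and only the citation chain remains.)
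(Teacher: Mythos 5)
Your proposal follows the same route as the paper: the paper's entire proof is the two-step citation chain you describe --- Theorem~\ref{nec} gives $\Fc_{pld}$-transitivity, and Theorem~6 of \cite{Hesf} (as quoted in the paper) characterizes $\Fc$-transitivity of the translation semigroup \emph{directly} by the condition that $\{t\in\Delta:\rho(t)\le\epsilon\}$ is a thick $\Fc$-set for every $\epsilon>0$. So the scenario you flag in your final parenthetical is the one that actually obtains, and your entire ``integral-to-pointwise'' upgrade is unnecessary for the paper's argument. That said, the upgrade you supply is essentially sound and would be the right move if the cited characterization were phrased in averaged form; the one technical slip is that admissibility~(ii) is only stated for $t,t'\in\Delta$, so applying it with $t'=s-\tau_0$ requires $s\in\tau_0+\Delta$, not merely $s\in D(\tau_0,r)\cap\Delta$. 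You should therefore integrate over the translated sector-cap $\tau_0+\Delta_r\subset K_r$ (of measure exactly $\alpha r^2$, uniformly in $\tau_0$) rather than over the Euclidean disk; with that replacement your constant $c_r$ and the rest of the estimate go through unchanged.
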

\begin{proof}
 In Theorem 6 \cite{Hesf}, a $\mathcal{F}$-transitive translation semigroup $\{T_t\}_{t \in \Delta}$ on $L^p_\rho(\Delta,\mathbb{K})$ can be characterized as follows:
\begin{itemize}
    \item for any $\epsilon > 0$, $\{t \in \Delta : \rho(t) \leq \epsilon\}$ is a thick $\mathcal{F}$-set in $\Delta$.
\end{itemize}

Then the proposition follows directly from Theorem \ref{nec}.
\end{proof}
From Proposition \ref{nectrans}, we can show that Example \ref{exam0} is not frequently hypercyclic under our definition.
\begin{proof}[Proof of Example \ref{exam0}]

First the translation semigroup in this example was shown to be frequently hypercyclic under the definition in \cite{Chaouchi} from the discussion in Example I (i) in the paper.

Next we prove that this semigroup is not frequently hypercyclic under our updated definition. Indeed for any fixed \( \epsilon <1 \),  the lower density of \( \{ t \in \Delta : \rho(t) \leq \epsilon \} \) is zero. For any $-1<k<1$, let $$\Delta_{1,k}:=\left\{t=x+yi\in \Delta:kx\leq y\leq x\right\}.$$ It is clear that there exists a $N>0$ such that $x + y \geq \sqrt{x - y}$ whenever $t=x+iy\in \Delta_{1,k}$ with $x>N.$ Hence  \[ \underline{D}(\{ t \in \Delta : \rho(t) \leq\epsilon \}\leq \underline{D}(\Delta/\Delta_{1,k})=\frac{k+1}{2}.\]
Letting $k\rightarrow -1$, we have that \(\underline{D}(\{ t \in \Delta : \rho(t)\leq\epsilon \}=0\), i.e. \( \{ t \in \Delta : \rho(t) \leq \epsilon \} \) is not a \(\Fc_{pld}\)-set in \(\Delta\). Therefore, from Proposition \ref{nectrans},  \(\{T_t\}_{t \in \Delta(\pi/4)}\) is not frequently hypercyclic.
\end{proof}

\section{Concluding Remarks}\label{sec:4}
The frequent hypercyclicity of single operators and a $C_{0}$-semigroup $\{T_{t}\}_{t\in \mathbb{R}_+}$ has been a research focus in linear chaotic dynamics during recent years. However, when the time indices are extended to complex sectors, defining frequent hypercyclicity and proving the associated criteria become significantly more challenging. In \cite{Chaouchi}, the authors first proposed a definition of frequent hypercyclicity for a $C_{0}$-semigroup $\{T_{t}\}_{t\in \Delta}$. However, as discussed in the introduction of this paper, that definition has certain deficiencies. Therefore, we present a revised definition in this paper, which is more natural and reasonable. Additionally, we establish a sufficient condition and a necessary condition for a $C_0$-semigroup \(\{T_{t}\}_{t \in \Delta}\) to be frequently hypercyclic. Moreover, we derive a more practical criterion for translation semigroups \(\{T_{t}\}_{t \in \Delta}\) on $L^p_\rho(\Delta, \mathbb{K})$ spaces, expressed in terms of the integral of the weight function. As a result, we provide explicit examples of frequently hypercyclic translation semigroups on \(L^{p}_{\rho}(\Delta, \mathbb{K})\). Lastly, we present a necessary condition on the weight function for the translation semigroups, under which it is demonstrated that Example I (i) \cite{Chaouchi} is not frequently hypercyclic under the revised definition.
\subsection*{Acknowledgements} This research was funded by the National Natural Science Foundation of China (No.  12101415, 62272313), Shenzhen Institute of Information Technology (No. SZIIT2022KJ008), Science and Technology Projects in Guangzhou (No. 2024A04J4429) and the project of promoting research capabilities for key constructed disciplines in Guangdong Province (No. 2021ZDJS028).

\end{document}